\theoremstyle{plain}
\def\beqn{\begin{eqnarray*}}
\def\eeqn{\end{eqnarray*}}
\def\beq{\begin{eqnarray}}
\def\eeq{\end{eqnarray}}
\newtheorem{theorem}{Theorem}
\newtheorem{corollary}{Corollary}
\newtheorem{remark}{Remark}
\renewcommand{\@fnsymbol}[1]{\@arabic{#1}}
\title{Robust oracle estimation and uncertainty quantification 
for possibly sparse quantiles} 
\author{
Eduard Belitser  
\footnote{
Department of Mathematics, VU Amsterdam, The Netherlands}  
\and
Paulo Serra
\footnote{
Department of Mathematics, VU Amsterdam, The Netherlands.} 
\and
Alexandra Vegelien
\footnote{
Department of Mathematics, VU Amsterdam, The Netherlands.} 
}
\begin{document}

\baselineskip=25pt

\maketitle

\begin{abstract}
\baselineskip=15pt
\noindent
A general {\it many quantiles + noise} model is studied in the robust formulation 
(allowing non-normal, non-independent observations), where the 
identifiability requirement for the noise is formulated in terms of quantiles 
rather than the traditional zero expectation assumption. We propose a penalization method 
based on the quantile loss function with appropriately chosen penalty function making inference 
on possibly sparse high-dimensional quantile vector. We apply a local approach to address the optimality by 
comparing procedures to the oracle sparsity structure. We establish that the proposed procedure 
mimics the oracle in the problems of estimation and uncertainty quantification 
(under the so called \emph{EBR condition}). Adaptive minimax results over sparsity scale 
follow from our local results. 

\noindent {\textit{Key words and phrases.}}
Estimation; oracle rate; oracle sparsity structure; quantile loss; uncertainty quantification
\end{abstract}

\newpage

\section{Introduction}

Since the emergence of statistical science, the classical `signal+noise' paradigm for
observed data is the main testbed in theoretical statistics for a vast number of methods 
and techniques for various inference problems and in various optimality frameworks. 
Many other practically important situations can be reduced to or approximated by the 
`signal+noise' setting, capturing the statistical essence of the original (typically, more complex)
model and preserving its main features in a pure form. 
There is a huge literature on `signal+noise' setting with big variety of combinations of 
the main ingredients in the study. We mention the following ingredients: assumptions on 
the observation model (moment conditions, independence, normality, etc.); applied methodology 
(LSE, penalization, shrinkage, thresholding, (empirical) Bayesian approach, projection, FDR method); 
studied inference problem (estimation, detection, model selection, testing, posterior contraction, 
uncertainty quantification, structure recovery); structural assumptions (smoothness, sparsity, 
clustering, shape constraints such as monotonicity, unimodality and convexity, loss functions 
(e.g., expectations of powers of $\ell_2$-norm, $\ell_q$-norm; Hamming loss, etc.); 
optimality framework and pursued criteria (minimax rate, oracle rate, control of FDR, FNR, 
expectation of the loss function, exponential bounds for the large deviations of the loss functions, etc.). 
A small sample of relevant literature includes 
 \cite{Donoho&etal:1992}, \cite{Donoho&Johnstone:1994}, \cite{Benjamini&Hochberg:1995}, 
 \cite{Birge&Massart:2001}, \cite{Johnstone&Silverman:2004},  
 \cite{Baraud:2004}, \cite{Abramovich&etal:2006}, \cite{Efron:2008}, \cite{Babenko&Belitser:2010}, 
 \cite{Belloni&etal:2011}, \cite{Castillo&vanderVaart:2012}, 
 \cite{Martin&Walker:2014}, \cite{Johnstone:2017}, \cite{Belitser:2017}, \cite{Bellec:2018}, \cite{Butucea&etal:2018}, \cite{Belitser&Nurushev:2020}. 

Suppose we observe $X=(X_1,\ldots, X_n)\sim 
\mathbb{P}_\theta^{n}$, where $\theta=(\theta_1,\ldots, \theta_n)\in\mathbb{R}^n$ 
is the parameter of interest. 
We shall suppress $n$ from the notation to simply write $\mathbb{P}_\theta$ 
for $\mathbb{P}_\theta^n$. A natural example is given by 
$X_i\overset{\rm ind}{\sim} \mathrm{N} (\theta_i,1)$, $i=1,\ldots,n$, 
but this specific distributional assumption on the data generating process will not be imposed.
the main modeling assumption in this note is that, for some fixed level $\tau\in(0,1)$, 
\[
\mathbb{P}(X_i-\theta_i \le 0)=\tau, \quad i=1,\dots,n.
\] 
In other words, we deal with the \emph{many quantiles model}, as $\theta_i$'s 
are the $\tau$-quantiles of the observed $X_i$'s and the goal is to make an inference 
on the high-dimensional parameter $\theta$.
One can study and make inference on several quantiles simultaneously across different levels $\tau\in(0,1)$.
Looking at quantiles instead of expected values can be a more sensible object of study, 
especially when interest focuses on the effect of the parameter values on the tails of a distribution 
of observations, in addition to, or instead of, the center. It is also well known that modeling of 
the median as opposed to the mean is much more robust to outlying observations; cf.\ \cite{Huber:2011}. 
In `signal+noise' models with iid noise, when estimates for quantiles of different levels are combined, their asymptotic efficiency relative to least squares can be arbitrarily large if the noise is, for instance, a two-component mixture of Gaussians; cf.\ \cite{Zou&Yuan:2008}, \cite{Jiang&etal:2013}, \cite{Jiang&etal:2014}.
Quantiles as objects of interest occur in diverse areas, including economics,
biology, meteorology; the values at risk in finance has the meaning of quantiles; cf.\  \cite{Santosa&Symes:1986}, \cite{Abrevaya:2002}, \cite{Koenker:2005},  \cite{Hao&Naiman:2007}, 
\cite{Belloni&Chernozhukov:2011}, \cite{Hulman&etal:2015}, \cite{Ciuperca:2018}, \cite{Belloni&etal:2019}.
In this paper, we work with a new setting `many quantiles + noise',  and therefore use 
the quantile loss rather than $\ell_2$-loss. The quantile loss function has certain properties of usual loss functions, but it is not symmetric, in fact this is the main peculiarity essentially characterizing 
the notion of quantile.

Besides quantile formulation, the next distinctive feauture of our study is 
the \emph{robust formulation} in the sense that we do not assume 
any particular form of the distribution of the observed data. Only a mild condition (Condition C1) 
is imposed. The observations do not need to  
be normal and, in fact no specific distribution is assumed,
the distribution of the `noise' terms $X_i-\theta_i$, $i=1,\ldots, n$, 
may depend on $\theta$ and do not even have to be independent. 
This makes the applicability scope of our results rather broad. 

The third aspect of our study is the {\emph{local approach}. It is in general impossible to make 
sensible inference on a hight-dimensional parameter without 
any additional structure, either in terms of assumptions on parameter, 
or on the observation model, or both. In this paper, we are concerned with 
\emph{sparsity structure}, various version of which have been predominantly studied 
in the literature recently. Sparsity structure means that a relative majority of the parameter 
entries are all equal to some fixed value (typically zero). 
In the \emph{local approach} we address optimality by comparing procedures 
to the \emph{oracle sparsity structure} (think of an `oracle observer' that knows 
the best sparsity pattern of the actual $\theta$). 
When applying  the local approach, the idea is not 
to rely on sparsity as such, but rather extract all the sparsity structure present 
in the data, and utilize it in the aimed inference problems. 
The claim that a procedure performs optimally in the local sense means that it   
attains the oracle quality, i.e., mimics the best sparsity structure pertinent to 
the true parameter, whichever it is. The local result imply minimax optimality over 
al scales (including the traditional sparsity class $\ell_0[s]$) that satisfy a certain condition.  

The final feature of our study concerns the problem of \emph{uncertainty quantification}.
In recent years, focus in non-parametric statistics has shifted from point estimation to 
uncertainty quantification, a more challenging problem, much less results on this topic  
are available in the literature; cf.\ \cite{Szabo&etal:2015}, \cite{Belitser:2017}, 
\cite{vanderPas&etal:2017}, \cite{Belitser&Nurushev:2020}.
Certain negative results (cf.\ \cite{Li:1989}, \cite{Baraud:2004}, \cite{Belitser&Nurushev:2019} and the  
references therein) show that in general a confidence set cannot simultaneously have coverage 
and the optimal size uniformly over all parameter values. 
This makes the construction of the so called `honest' confidence sets impossible, 
and a strategy recently pursued in the literature is to discard a set of `deceptive parameters' 
to ensure coverage at the remaining parameter values, while maintaining the optimal size 
uniformly over the whole set. 
In an increasing order of generality, removing deceptive parameters is expressed by imposing 
the conditions of \emph{self-similarity}, \emph{polished tail} and \emph{excessive bias restriction} (EBR), see
\cite{Szabo&etal:2015}, \cite{Belitser:2017}, \cite{vanderPas&etal:2017}, \cite{Belitser&Nurushev:2020}.
All the above papers deal with $\ell_2$-loss framework for Gaussian observations in 
`many means + noise' setting (a more general case is considered in \cite{Belitser&Nurushev:2020}). 

In this paper, we work with a new setting `many quantiles + general noise',  and use 
the quantile loss rather then $\ell_2$-loss, so we propose and exploit a new version of EBR condition 
expressed in terms of quantile loss. To the best of our knowledge, there are no results on uncertainty 
quantification neither for the quantile loss function, nor for the $\ell_1$-loss, also neither in local, nor global 
(minimax) formulation in this general robust setting. In this respect, our study presents the first step in this direction.

We finally summarize the main contributions of this paper.
\begin{itemize}
\item
For our new (robust) setting `many quantiles + general noise' 
we propose a \emph{penalization procedure} for selecting the sparsity pattern, 
based on the quantile loss (as counterpart of $\ell_2$ approximation term) 
and the penalty term corresponding to the quantile loss (as counterpart of model complexity term).  
\item
The estimated sparsity pattern is next used for the construction of an
estimator of $\theta$ and a confidence set for $\theta$, 
again in terms of the quantile loss function. 

\item 
Two theorems establish the local (oracle) optimality of the estimator and confidence 
set (under the EBR condition), respectively. 
\item
We provide an elegant and relatively short proofs of the main results; compare with the relatively 
laborious proofs of related results in \cite{Szabo&etal:2015}, \cite{Belitser:2017}, 
\cite{vanderPas&etal:2017}, \cite{Belitser&Nurushev:2020}.
\item
The obtained results are robust and local in the sense explained above.
\item
The obtained local results imply adaptive minimax optimality for scales of classes that satisfy certain relation 
(in particular, for the traditional sparsity scale).   

\end{itemize}

Organization of the rest of the paper is as follows.
In Section \ref{section_preliminaries} we give a robust formulation of the observation model 
in the `many quantiles + general noise' setting, and introduce some notations and preliminaries. 
In Section \ref{section_main} we  present the main results of the paper and discuss 
their consequences for the optimality in the minimax sense for the traditional sparsity scale. 
The proofs of the theorems are provided in Section \ref{section_proofs}. 
\section{Robust model formulation and preliminaries}
\label{section_preliminaries}

We can formally rewrite the model stated in the introduction in the familiar `signal+noise' form:
\begin{align}
\label{model}
X_i = \theta_i +\xi_i, \quad \mathbb{P}_\theta (\xi_i \le 0)=\tau, \;\;
i=1,\ldots, n.
\end{align}
where $\xi_i = X_i-\theta_i$ can be thought of as `noise'.  
Note that all the quantities depend of course on $\tau$. Precisely, for $\tau\in(0,1)$, 
in the model \eqref{model} we have $\theta=\theta(\tau)$, $\xi=\xi(\tau)$ with 
$\mathbb{P}_\theta(\xi_i(\tau)\le 0)= \tau$, $i=1,\ldots,n$. Not to overload the notation, we suppress 
the dependence on $\tau$ in the sequel. 
As we mentioned in the introduction, we study robust setting in the sense that we do not assume 
any particular distribution of $\xi=(\xi_1,\ldots,\xi_n)$. In fact, the $\xi_i$'s do not have to be 
identically distributed, their distribution may depend on $\theta$ and  they do not even have to 
be independent. 

Introduce the following asymmetric absolute deviation function:  
\[
\rho(x)=\rho_\tau(x)=\rho_{\tau,m}(x)=\sum_{i=1}^m x_i (\tau-\mathrm{1}\{x_i \le 0\}), 
\quad x\in\mathbb{R}^m, \;\; m\in\mathbb{N},
\] 
which we will also call \emph{quantile loss function}.
 
Slightly abusing notation, we use the same notation $\rho(x)$ for any $m\in\mathbb{N}$, 
for example, most of the time it will be either $m=n$ or $m=1$, depending on the dimensionality of 
the argument of the function. This will always be clear from the context. 
Often, we will suppress the dependence of $\rho$ on a fixed $\tau \in(0,1)$, unless emphasizing 
this dependence where it is relevant.  
\begin{remark}
The origin of the quantile loss function $\rho_\tau(x)$, $x\in\mathbb{R}$, is well explained in the book 
\cite{Koenker:2005}. The function $\rho_\tau(x)$ characterizes the $\tau$-quantile of a random 
variable $Z$ in the sense that the $\tau$-quantile $\theta_\tau$ of $Z$ is known 
to minimize the criterion function $\mathbb{E}\rho_\tau(Z-\vartheta)$ with respect to 
$\vartheta$, i.e., $\arg\min_{\vartheta\in\mathbb{R}} \mathbb{E}\rho_\tau(Z-\vartheta) = \theta_\tau$. 
Basically, this function plays the same role for characterizing the quantiles as the quadratic 
function in characterizing the expectation of a random variable. 

The quantile loss function $\rho$ can be related to  the $\ell_1$-criterion 
$|x|=\sum_{i=1}^m |x_i|$, 
$x=(x_1,\ldots, x_m) \in \mathbb{R}^m$: for any $\tau\in(0,1)$,
\begin{align}
\label{prop_rho_abs}
(1-c_\tau) |x|\le\rho_\tau(x)\le c_\tau |x|, \;\; 
\text{with } \;\; c_\tau= \max\{\tau,1-\tau\}.
\end{align}
In particular, it follows that $\rho_{1/2}(x) = |x|/2$.
Another useful property of the quantile loss function $\rho$ to be used later on is that, 
for any $x\in \mathbb{R}^m$,
\begin{align}
\label{prop_-x}
\rho_\tau(-x) \le C_\tau \rho_\tau(x),  \quad \text{with} \quad 
C_\tau=\max\{\tfrac{1-\tau}{\tau}, \tfrac{\tau}{1-\tau}\} =\tfrac{c_\tau}{1-c_\tau}.
\end{align}
\end{remark}

\begin{remark}
\label{rem2}
The above quantile loss function evaluated at the difference 
$\rho(x-y)$, $x,y \in \mathbb{R}^m$, 
possesses all the properties of a metric $d(x,y)$ except for the symmetry. 
In particular, $\rho(0) = 0$ (zero at zero); if $x\not=y$, $\rho(x-y)> 0$ (positivity); and finally 
the triangle inequality holds
\begin{align}
\label{triangle_ineq}
\rho(x+y) \le \rho(x) + \rho(y), \quad x,y\in \mathbb{R}^m.
\end{align}
\end{remark}
    
Since we have as many parameters as observations, it is typically impossible to make inference on  
$\theta$ even in a weak sense unless the data possesses some structure.
Here, we work with the structural assumption that $\theta$ is (possibly) a sparse vector.
Specifically, we assume that $\theta\in L_I$, where 
$I \subseteq [n]=\{1,...,n\}$ and $L_I = \{x \in \mathbb{R}^n : x_i = 0,  i \in I^c \}$, where 
$I^c=[n]\setminus I$. The structure here 
is the unknown `true' sparsity pattern $I_0=I_0(\theta)$, that is, $\theta \in L_{I_0}$ and 
$I_0$ is the `minimal' sparsity structure in the sense that 
$I_0=\arg\min_{I\in\mathcal{I}: L_I \ni \theta} \sum_{i\in I} i$.  

Introduce some further notation:  for $x\in\mathbb{R}^n$, denote its $\ell_1$-norm by
$|x|=\sum_{i=1}^n |x_i|$, $|S|$ denote the cardinality of a set $S$. 
Further let us introduce the so called `quantile projection' $\mathrm{P}_{I}$ 
onto $L_I$ (called just projection in what follows), with respect to the quantile loss 
function $\rho(x-y)$. For $x \in \mathbb{R}^n$, define $\mathrm{P}_{I} x$ as follows: 
\[
\inf_{y\in L_I} \rho(x-y)=\rho(x-\mathrm{P}_{I} x).
\]
In view of the properties of the quantile loss function $\rho$ given by Remark \ref{rem2}, 
the projection $\mathrm{P}_I$ is readily found as the following linear operator:
\[
\mathrm{P}_I x=(x_i\, 1\{i\in I\},\, i=1,\ldots,n)\in \mathbb{R}^n.
\]
We will use later a certain monotonicity property of the quantile projection operator.
\begin{align}
\label{prop_monoton}
\text{If} \;\; I_1\subseteq I_2, \;\;\text{then} \quad  \rho(\mathrm{P}_{I_1} x) \le \rho(\mathrm{P}_{I_2} x), \;\;
x \in\mathbb{R}^n.
\end{align} 

Denote $\lambda(s)=s \log (en/s)$ (with the convention $0\log (a/0)=0$ for $a>0$) and 
\[
p(I)=\big(|I| \lambda(|I|)\big)^{1/2}= |I|\big[\log(en/|I|)\big]^{1/2}.
\] 
We have that $\lambda(s) \ge s$ for $s\in[0,n]$, and 
besides, since $\lambda(s)$ is increasing in $s\in(0,n]$, $\lambda(s) \ge 
1 + \log(n)$ for all $s\in [n]$.

The following relation will be used later: for any $\nu>1$,
\begin{align}
\label{rel_nu}
\sum_{I\in\mathcal{I}}e^{-\nu\lambda(|I|)}= \sum_{s\in[n]}
\sum_{I: |I|=s} e^{-\nu\lambda(s)}\le \sum_{s\in[n]} e^{-(\nu-1)s} 
\le C_\nu,
\end{align}
with $C_\nu=(e^{\nu-1}-1)^{-1}<\infty$.

The following conditions on $\xi=X-\theta$ is assumed throughout.\smallskip\\   
{\sc Condition C1.}
For some $M_\xi, \alpha_\xi, H_\xi >0$ and all $M\ge 0$ and all $I \in \mathcal{I}$,    
\begin{align}
\sup_{\theta\in\mathbb{R}^n} 
\mathbb{P}_\theta\big( \rho(\mathrm{P}_I \xi)> M_\xi p(I) +|I|^{1/2}M^{1/2}\big) \le
H_\xi e^{-\alpha_\xi M}.
\label{C1}
\end{align}
Let $\mathcal{P}_\alpha$ stand for the family of distributions 
$\mathbb{P}_\theta$ satisfying Condition C1.  
Notice that \eqref{C1} is always fulfilled for $I=\varnothing$. Then for any $I\in\mathcal{I}$ with 
$|I|\ge 1$, and any $M>M_\xi$,
\[
\mathbb{P}_\theta\big( \rho(\mathrm{P}_I \xi)> M p(I)\big) \le
H_\xi e^{-\alpha_\xi (M-M_\xi) \lambda(|I|)} \le e^{-\alpha_\xi (M-M_\xi) \lambda(1)}.
\]

\begin{remark}
Condition C1 is mild, for instance, it holds for independent  {\it sub-gaussian} $\xi_i$'s.
Recall one of the equivalent definitions of sub-gaussianity: a random variable 
$W$ is called $\sigma$-{\it subgaussian} for $\sigma>0$ if for some $c_0>0$ 
$\mathbb{E}e^{c_0 W^2/\sigma^2} \le 2$.
In our case, $\sigma=1$. Recall that $\|x\|^2=\sum_{i=1}^n x_i^2 $ denotes 
the usual (squared) $\ell_2$-norm of $x\in\mathbb{R}^n$.
Now, since $\mathrm{P}_I \xi$ has at most $|I|$ non-zero entries, by \eqref{prop_rho_abs} and 
the Cauchy-Schwartz inequality we have 
$\rho(\mathrm{P}_I \xi)\le c_\tau |\mathrm{P}_I \xi| \le c_\tau|I|^{1/2} \|\mathrm{P}_I \xi \|$. 
Recall also that $\lambda(s) \ge s$, $s\in[0,n]$.
Using these and the Markov inequality, we obtain that, for $M_\xi \ge c_\tau (c_0^{-1}\log 2)^{1/2}$,  
\begin{align*}
\mathbb{P}\big(\rho(\mathrm{P}_I \xi) &\ge M_\xi p(I)+|I|^{1/2}M^{1/2}\big)\\
&\le \mathrm{P}\Big(c_\tau |I|^{1/2} \|\mathrm{P}_I \xi \|_2  
\ge M_\xi p(I)+|I|^{1/2}M^{1/2} \Big)\\
& =\mathrm{P}\Big(\|\mathrm{P}_I \xi \|_2 \ge \tfrac{M_\xi}{c_\tau} [\lambda(|I|)]^{1/2}
+c_\tau^{-1}M^{1/2} \Big)\\
&\le \mathrm{P}\big(c_0 \|\mathrm{P}_I \xi \|_2^2\ge c_0 c_\tau^{-2} M_\xi^2  \lambda(|I|)
+ c_0 c_\tau^{-2} M\big)\\
&\le e^{|I| \log 2 - c_0M_\xi^2 q^{-2}_\tau\lambda(|I|)-c_0 c_\tau^{-2} M}  
\le e^{-c_0 c_\tau^{-2} M}.
\end{align*}
\end{remark}

\begin{remark}
We can extend our setting by including also a parameter $\sigma>0$ by considering 
$\sigma\xi_i$ instead of just $\xi_i$ (and $(X_i-\theta)/\sigma$ instead of $X_i-\theta$) 
in \eqref{model} and in Condition C1. 
In that case, the parameter $\sigma>0$ is assumed to be known 
and fixed throughout. Together with $n$, it reflects the information amount in the model in the sense that 
$\sigma \to 0$ means the flux of information.  
\end{remark}

We propose a penalized projection estimator. For $\kappa>0$, define 
$\hat{I}=\hat{I}(X,\kappa)$ to be a minimizer of the criterion $C(I)=C(I,\kappa)=C(I,\kappa,X)$:  
\begin{align}
C(I)&=\rho(X-\mathrm{P}_I X)+\kappa p(I) =\rho(\mathrm{P}_{I^c} X)+\kappa p(I), \notag\\ 
\label{def_crit}
\min_{I\in\mathcal{I}}C(I)&=C(\hat{I}). 
\end{align}
\begin{remark}
Since the proposed procedure $\hat{I}$ for selecting sparsity pattern is based on the 
non-symmetric quantile loss function $\rho$, the deviations from below and from above 
are treated differently. But the computation of the estimator $\hat{I}$ of the sparsity pattern 
is not difficult. Indeed, it can be reduced to the search over $n$ options: 
with $b_k= \rho(X_k) \ge 0$, $k\in[n]$,
\begin{align*}
\min_{I \in \mathcal{I}} C(I) 
&= \min_{I \in \mathcal{I}}\{\rho(\mathrm{P}_{I^c} X)+\kappa p(I)\} 
= \min_{I \in \mathcal{I}} \Big\{\sum_{k\in I^c} \rho(X_k)+\kappa p(I)\Big\} \\
&= \min_{i\in [n]} \Big\{\min_{I\in\mathcal{I}: |I|=i}
\sum_{k\in I^c} \rho(X_k)+\kappa i\log^{1/2}(\tfrac{en}{i})\Big\}  \\
&=\min_{i\in [n]} \Big\{\sum_{k=1}^{n-i} b_{(k)} + \kappa (i\log^{1/2}(\tfrac{en}{i})\Big\}
=\min_{i\in [n]} \big\{B_i+ \kappa (i\log^{1/2}(\tfrac{en}{i})\big\},
\end{align*}
where $b_{(1)} \le \ldots  \le b_{(n)}$ is the ordered 
sequence of $b_k$'s and $B_i =\sum_{k=1}^{n-i} b_{(k)}$.
\end{remark}

Now, using the estimated sparsity structure $\hat{I}$, define the estimator 
\begin{align}
\label{def_estimator}
\hat{\theta}=\hat{\theta}(X,K)=\mathrm{P}_{\hat{I}} X.
\end{align}

From the triangle inequality \eqref{triangle_ineq}, it follows that $\rho(x-y) \ge\rho(x) -\rho(y)$.
Using this, \eqref{prop_-x} and the definition \eqref{def_crit}, we derive that, for any $I'\in\mathcal{I}$, 
\begin{align}
\kappa(p(\hat{I})-p(I')) &\le \rho(\mathrm{P}_{I'^c} X)- \rho(\mathrm{P}_{\hat{I}^c} X)=
\rho(\mathrm{P}_{\hat{I} \backslash I'} X) - \rho(\mathrm{P}_{I' \backslash \hat{I}}X) \notag \\ 
& \le \rho(\mathrm{P}_{\hat{I} \backslash I'} \theta) + \rho(\mathrm{P}_{\hat{I} \backslash I'} \xi) 
- \rho(\mathrm{P}_{I' \backslash \hat{I}} \theta) + \rho(\mathrm{P}_{I' \backslash \hat{I}} (-\xi)) \notag \\
&\le \rho(\mathrm{P}_{I'^c} \theta)- \rho(\mathrm{P}_{\hat{I}^c} \theta) + 
 \rho(\mathrm{P}_{\hat{I}} \xi) + C_\tau \rho(\mathrm{P}_{I'}\xi).
\label{def_hat_I}
\end{align}

For $\varkappa>0$, $\theta\in\mathbb{R}^n$ and $I \in\mathcal{I}$, introduce the quantity
\[
r(\theta, I)=r_\varkappa(\theta, I)=\rho(\theta-\mathrm{P}_I(\theta))+\varkappa p(I), 
\] 
which we call the {\it quantile rate} of the sparsity structure $I$.
The oracle sparsity structure $I_o=I_o(\theta)=I_o(\theta,\varkappa)$ is the one minimizing  $r_\varkappa(\theta, I)$:
\begin{align}
\label{def_oracle}
\min_{I\in\mathcal{I}} r_\varkappa(\theta,I)= r_\varkappa(\theta,I_o) = r_\varkappa(\theta)=r(\theta).
\end{align}
where its minimal value $r(\theta)$ is called the {\it oracle quantile rate}.

\section{Main results}
\label{section_main}
In this section we give the main results. All the constants in the below 
assertions depend on the fixed constants $M_\xi, H_\xi, \alpha_\xi$  appearing in Condition C1, 
the constant $\kappa$ appearing in the oracle structure definition \eqref{def_crit}, and the quantile level $\tau$. 
 
The next result concerns the estimation problem.
\begin{theorem}[Estimation]
\label{th1}
Let Condition C1 be fulfilled. Then for any $\varkappa>0$ and sufficiently large $\kappa$, 
there exist positive $M_1, H_1, m_1$ such that
\begin{align}
\label{th1_a}
\mathbb{P}_\theta\big(\rho(\theta-\hat{\theta})\ge M_1 r(\theta)\big) \le 
H_1 e^{-m_1 \lambda(1)} \le
H_1 n^{-m_1},
\quad \theta\in\mathbb{R}^n,
\end{align}
where $\hat{\theta}$ is defined by \eqref{def_estimator}.
\end{theorem}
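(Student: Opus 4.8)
The plan is to start from the deterministic inequality \eqref{def_hat_I}, specialize it to $I'=I_o$, and turn it into the bound $\rho(\theta-\hat\theta)\le M_1\,r(\theta)$ on a single high-probability event on which all the noise pieces $\rho(\mathrm{P}_I\xi)$ are simultaneously comparable to $p(I)$; the penalty constant $\kappa$ is then taken large enough to absorb the stochastic contribution coming from the data-driven set $\hat I$.

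First I would fix a constant $c>2/\alpha_\xi$ and introduce the event
\[
\Omega=\Big\{\rho(\mathrm{P}_I\xi)\le (M_\xi+c^{1/2})\,p(I)\quad\text{for all }I\in\mathcal{I}\Big\}.
\]
Condition C1 with the choice $M=c\,\lambda(|I|)$ turns the threshold there into $M_\xi\,p(I)+|I|^{1/2}M^{1/2}=(M_\xi+c^{1/2})\,p(I)$, so that $\mathbb{P}_\theta\big(\rho(\mathrm{P}_I\xi)>(M_\xi+c^{1/2})p(I)\big)\le H_\xi e^{-\alpha_\xi c\,\lambda(|I|)}$ for $|I|\ge 1$, while $I=\varnothing$ is trivial since $\rho(\mathrm{P}_\varnothing\xi)=0=p(\varnothing)$. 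Using $\binom{n}{s}\le e^{\lambda(s)}$, the monotonicity of $\lambda$, and $\lambda(s)\ge\lambda(1)=1+\log n$, a union bound over $\mathcal{I}$ gives
\[
\mathbb{P}_\theta(\Omega^c)\le H_\xi\sum_{s=1}^n\binom{n}{s}e^{-\alpha_\xi c\,\lambda(s)}\le H_\xi\sum_{s=1}^n e^{-(\alpha_\xi c-1)\lambda(s)}\le H_\xi\,n\,e^{-(\alpha_\xi c-1)\lambda(1)}\le H_\xi\,e^{-(\alpha_\xi c-2)\lambda(1)},
\]
which is of the required form $H_1 e^{-m_1\lambda(1)}$ with $m_1=\alpha_\xi c-2>0$, and $e^{-m_1\lambda(1)}=e^{-m_1}n^{-m_1}\le n^{-m_1}$.

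Next, on $\Omega$ I would put $I'=I_o$ in \eqref{def_hat_I} and rearrange to
\[
\rho(\mathrm{P}_{\hat I^c}\theta)+\kappa\,p(\hat I)\le\big[\rho(\mathrm{P}_{I_o^c}\theta)+\kappa\,p(I_o)\big]+\rho(\mathrm{P}_{\hat I}\xi)+C_\tau\,\rho(\mathrm{P}_{I_o}\xi).
\]
Since $r(\theta)=\rho(\mathrm{P}_{I_o^c}\theta)+\varkappa\,p(I_o)\ge\varkappa\,p(I_o)$, the bracket is at most $(\kappa/\varkappa)\,r(\theta)$ (taking $\kappa\ge\varkappa$); on $\Omega$ one also has $\rho(\mathrm{P}_{I_o}\xi)\le(M_\xi+c^{1/2})p(I_o)\le(M_\xi+c^{1/2})r(\theta)/\varkappa$ and $\rho(\mathrm{P}_{\hat I}\xi)\le(M_\xi+c^{1/2})p(\hat I)$. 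Substituting these and moving $(M_\xi+c^{1/2})p(\hat I)$ to the left, for $\kappa>M_\xi+c^{1/2}$ both remaining left-hand terms are nonnegative, so $\rho(\mathrm{P}_{\hat I^c}\theta)\le c_3\,r(\theta)$ and $p(\hat I)\le c_4\,r(\theta)$ with $c_3,c_4$ explicit (depending only on $\kappa,\varkappa,\tau,M_\xi,\alpha_\xi$). Finally, $\theta-\hat\theta=\mathrm{P}_{\hat I^c}\theta-\mathrm{P}_{\hat I}\xi$, so by \eqref{triangle_ineq} and \eqref{prop_-x},
\[
\rho(\theta-\hat\theta)\le\rho(\mathrm{P}_{\hat I^c}\theta)+C_\tau\rho(\mathrm{P}_{\hat I}\xi)\le\big(c_3+C_\tau(M_\xi+c^{1/2})c_4\big)r(\theta);
\]
choosing $M_1$ slightly above $c_3+C_\tau(M_\xi+c^{1/2})c_4$ gives $\{\rho(\theta-\hat\theta)\ge M_1 r(\theta)\}\subseteq\Omega^c$, and the bound on $\mathbb{P}_\theta(\Omega^c)$ finishes the argument.

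The step I expect to be the main obstacle is the uniform control of the exponentially many noise pieces $\rho(\mathrm{P}_I\xi)$ with a probability bound that actually decays polynomially in $n$, rather than staying a small absolute constant — which is all a naive application of \eqref{rel_nu} (i.e.\ using only $\lambda(s)\ge s$) would give. The resolution uses that the deviation threshold in Condition C1 grows like $p(I)=(|I|\lambda(|I|))^{1/2}$, so scaling the free parameter $M$ proportionally to $\lambda(|I|)$ yields an exponent $-\alpha_\xi c\,\lambda(|I|)$ that both beats the combinatorial factor $\binom{n}{|I|}\le e^{\lambda(|I|)}$ and leaves a surplus bounded above by $-(\alpha_\xi c-1)\lambda(1)$; this surplus is what produces the $n^{-m_1}$-decay. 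The constraints $\alpha_\xi c>2$ and then $\kappa>M_\xi+c^{1/2}$ are precisely what ``sufficiently large $\kappa$'' refers to; the rest is bookkeeping with the quasi-metric properties of $\rho$ and the elementary bounds $\rho(-x)\le C_\tau\rho(x)$ and $p(I_o)\le r(\theta)/\varkappa$.
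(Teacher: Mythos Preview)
Your proposal is correct and follows essentially the same approach as the paper: both proofs plug $I'=I_o$ into \eqref{def_hat_I}, control the noise pieces $\rho(\mathrm{P}_I\xi)$ via Condition~C1 and a union bound over $\mathcal{I}$ (your $\Omega^c$ is exactly the paper's $E_3^c$ bound), and take $\kappa$ large enough to absorb the stochastic term $\rho(\mathrm{P}_{\hat I}\xi)$. The only difference is organizational: the paper decomposes into three events $E_1,E_2,E_3$ (with $E_2=\{r(\theta,\hat I)\le A_1 r(\theta)\}$ as an intermediate step), whereas you work directly on the single event $\Omega$ and carry out the deterministic algebra in one pass --- your route is slightly more streamlined but relies on the same ingredients.
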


Next we address the new problem of \emph{uncertainty quantification} (UQ) 
for the parameter $\theta$. A confidence set is defined in terms of quantile loss function $\rho$ as follows:
\begin{align}
\label{def_conf_set}
B(\hat{\theta}_0,\hat{r}_0)=\{\theta \in\mathbb{R}^n: \rho(\theta-\hat{\theta}_0) \le \hat{r}_0\},
\end{align}
where the `center' $\hat{\theta}_0=\hat{\theta}_0(X):\mathbb{R}^n \mapsto \mathbb{R}^n$ and `radius' 
$\hat{r}_0=\hat{r}_0(X): \mathbb{R}^n \mapsto \mathbb{R}_+ =[0,+\infty]$ are
measurable functions of the data $X$. The goal is to construct such a confidence set 
$B(\hat{\theta},C\hat{r})$ that for any $\alpha_1,\alpha_2\in(0,1]$ and some function 
$R(\theta)$, $R:\mathbb{R}^n\rightarrow \mathbb{R}_+$, there exist $C, c > 0$ such that
\begin{align}
\label{uq_framework}
\sup_{\theta\in\Theta_{\rm cov}}\mathbb{P}_\theta\big(\theta\notin 
B(\hat{\theta},C\hat{r})\big)\le\alpha_1, \quad
\sup_{\theta\in\Theta_{\rm size}}\mathbb{P}_\theta\big(\hat{r}\ge 
c R(\theta)\big)\le\alpha_2,
\end{align}
for some $\Theta_{\rm cov}, \Theta_{\rm size} \subseteq \mathbb{R}^n$. 
The function $R(\theta)$, called \emph{radial rate}, is a benchmark 
for the effective radius of the confidence set $B(\hat{\theta},C\hat{r})$. 
The first expression in \eqref{uq_framework} is called \emph{coverage relation} 
and the second \emph{size relation}. It is desirable to find the smallest 
$r(\theta)$, the biggest  $\Theta_{\rm cov}$ and $\Theta_{\rm size}$ such 
that \eqref{uq_framework} holds and $R(\theta)\asymp r(\Theta_{\rm size})$, 
where $r(\Theta_{\rm size})$ is the optimal rate 
in estimation problem for $\theta$. In our local approach, we pursue even more ambitious goal 
$R(\theta)\asymp r(\theta)$, where $r(\theta)$ is the oracle rate from the (local) estimation problem. 

Typically, the so called \emph{deceptiveness} issue arises for the UQ problem in that 
the confidence set of the optimal size and high coverage can only be constructed 
for non-deceptive parameters (in particular, $\Theta_{\rm cov}$ cannot be the whole 
set $\mathbb{R}^n$). That is, coverage with an optimal sized radius is only possible 
if certain deceptive set of parameters is excluded from consideration, which 
is expressed by imposing some condition on the parameter.
For example, the EBR (excessive bias restriction) condition in case of $\ell_2$-norm 
is proposed in \cite{Belitser:2017}--\cite{Belitser&Nurushev:2020}. 
Here we need an EBR-like condition (we keep the same term \emph{EBR}), but now in terms of 
the quantile loss function $\rho$. \smallskip\\
{\sc Condition EBR.}
We say that $\theta\in\mathbb{R}^n$ satisfies the \emph{excessive bias restriction} (EBR) condition 
with structural parameter $t\ge 0$ if 
\[
\rho(\theta-\mathrm{P}_{I_o}\theta)\le t p(I_o),
\]
where the oracle structure $I_o=I_o(\theta)$ is defined by \eqref{def_oracle}. 
Let $\Theta_{\rm eb}(t)=\Theta_{\rm eb}(t,\varkappa)$ be the collection of all $\theta\in\mathbb{R}^n$ that satisfies 
the EBR condition with structural parameter $t$. \smallskip

The extent of the restriction $\theta\in\Theta_{\rm eb}(t)$ varies over different choices of constant 
$t$, becoming more lenient as $t$ increases, eventually covering the entire parameter space. 
In general, for any $t$, a sequence of $\theta$'s can be found such that 
$\theta\not \in \Theta_{\rm eb}(t)$, when $n$ varies. On the other hand, the set 
$\Theta_{\rm eb}(0)$ is not empty and consists of such $\theta$'s for which the oracle  
$I_o$ coincides with the true sparsity structure $I_0$. For a general discussion on EBR, we refer the reader 
to \cite{Belitser&Nurushev:2019}. For sparsity structure in $\ell_1$-sense, the EBR condition is satisfied if 
the minimal squared value the nonzero coordinate of $\theta$ is bounded is larger than 
a certain lower bound, but this bound depends also on the number of the nonzero coordinates of $\theta$.
For example, for some sufficiently large $C>0$,
\[
\Theta(C)=\cup_{I\in\mathcal{I}}
\big\{\theta\in L_I: |\theta_i| \ge C\log^{1/2}\big(\tfrac{en}{|I|}\big), i \in I\big\}
\subseteq \Theta_{\rm eb}(0).
\] 

 \begin{theorem} [Confidence ball]
\label{th2}
Let the confidence set $B(\cdot, \cdot)$ be defined by \eqref{def_conf_set},
$\hat{r}=p(\hat{I})$, $\hat{I}$ and $\hat{\theta}$  be given by \eqref{def_crit} 
and \eqref{def_estimator} respectively.
Then for sufficiently large $\kappa, \varkappa$ there exist constants $M_2=M_2(t), 
H_2, m_2, M_3, H_3, m_3 >0$ such that for any $t\ge 0$, 
\begin{align} 
\sup_{\theta\in\Theta_{\rm eb}(t)} \mathbb{P}_\theta
\big(\theta\notin B(\hat{\theta}, M_2(t) \hat{r})\big) 
&\le H_2 n^{-m_2}.
\label{coverage credible}  \\
\sup_{\theta\in\mathbb{R}^n}\mathbb{P}_\theta\big(\hat{r}\ge M_3 r(\theta)\big) 
&\le H_3 n^{-m_3}.
\label{size credible}
\end{align}
\end{theorem}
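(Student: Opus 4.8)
The plan is to establish the two relations separately, leveraging the penalized estimation machinery already set up via \eqref{def_hat_I} and Condition C1. For the \emph{size relation} \eqref{size credible}, note that $\hat r = p(\hat I)$, so I need a high-probability upper bound $p(\hat I) \le M_3 r(\theta)/\varkappa$ for all $\theta\in\mathbb{R}^n$. I would start from \eqref{def_hat_I} with the choice $I' = I_o$, the oracle structure. This yields
\[
\kappa\, p(\hat I) \le \kappa\, p(I_o) + \rho(\mathrm{P}_{I_o^c}\theta) - \rho(\mathrm{P}_{\hat I^c}\theta) + \rho(\mathrm{P}_{\hat I}\xi) + C_\tau \rho(\mathrm{P}_{I_o}\xi).
\]
Dropping the nonpositive term $-\rho(\mathrm{P}_{\hat I^c}\theta)$ and recognizing $\rho(\mathrm{P}_{I_o^c}\theta) = \rho(\theta - \mathrm{P}_{I_o}\theta)$, the first three terms are bounded by $(\kappa/\varkappa+1)\, r(\theta)$ up to absorbing constants, using $\varkappa p(I_o)\le r(\theta)$ and the definition of $r(\theta)$. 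The noise term $C_\tau\rho(\mathrm{P}_{I_o}\xi)$ is controlled by Condition C1 (its specialization after \eqref{C1}) on an event of probability at least $1 - H_\xi e^{-\alpha_\xi(M-M_\xi)\lambda(1)}$, giving $\rho(\mathrm{P}_{I_o}\xi) \le M\, p(I_o) \lesssim r(\theta)/\varkappa$. The remaining term $\rho(\mathrm{P}_{\hat I}\xi)$ is the delicate one, since $\hat I$ is data-dependent: I would bound it uniformly by $\sup_{I\in\mathcal I}\{\rho(\mathrm{P}_I\xi) - \tfrac12\kappa\, p(I)\}$, absorb the $\tfrac12\kappa\, p(\hat I)$ into the left-hand side, and then control the supremum via a union bound over $\mathcal I$ using \eqref{C1} together with the summability relation \eqref{rel_nu}. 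This forces $\kappa$ to be chosen large enough (relative to $M_\xi$ and the combinatorial factor in \eqref{rel_nu}), which is exactly the "sufficiently large $\kappa$" hypothesis. Collecting terms gives $\tfrac12\kappa\, p(\hat I) \le (\text{const})\, r(\theta)$ on an event of probability $\ge 1 - H_3 n^{-m_3}$, which is \eqref{size credible}.

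For the \emph{coverage relation} \eqref{coverage credible}, I need $\rho(\theta - \hat\theta) \le M_2(t)\, p(\hat I)$ with high probability, uniformly over $\theta\in\Theta_{\rm eb}(t)$. Write $\rho(\theta - \hat\theta) = \rho(\theta - \mathrm{P}_{\hat I}X) \le \rho(\theta - \mathrm{P}_{\hat I}\theta) + \rho(\mathrm{P}_{\hat I}\theta - \mathrm{P}_{\hat I}X) = \rho(\mathrm{P}_{\hat I^c}\theta) + \rho(\mathrm{P}_{\hat I}\xi)$ by the triangle inequality \eqref{triangle_ineq} and linearity of the projection. The second summand $\rho(\mathrm{P}_{\hat I}\xi)$ is handled exactly as in the size argument: on the good event it is $\lesssim p(\hat I)$ after the union bound. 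For the first summand $\rho(\mathrm{P}_{\hat I^c}\theta) = \rho(\theta - \mathrm{P}_{\hat I}\theta)$, I would split according to whether $I_o \subseteq \hat I$ or not; more robustly, I would bound $\rho(\mathrm{P}_{\hat I^c}\theta) \le \rho(\mathrm{P}_{I_o^c}\theta) + \rho(\mathrm{P}_{I_o\setminus\hat I}\theta) = \rho(\theta - \mathrm{P}_{I_o}\theta) + \rho(\mathrm{P}_{I_o\setminus\hat I}\theta)$ using the triangle inequality and $\mathrm{P}_{\hat I^c}\theta = \mathrm{P}_{I_o^c \cap \hat I^c}\theta + \mathrm{P}_{I_o \cap \hat I^c}\theta$. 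The EBR condition gives $\rho(\theta - \mathrm{P}_{I_o}\theta) \le t\, p(I_o)$, and by Theorem \ref{th1} (estimation) together with the size bound, $p(I_o) \lesssim r(\theta) \lesssim p(\hat I)$ on the good event (since $\varkappa p(I_o)\le r(\theta)$ and $r(\theta)\lesssim p(\hat I)\cdot\varkappa^{-1}$ would need care — actually I would instead use that on the good event $p(\hat I)\gtrsim p(I_o)$ directly, which follows by comparing criteria). The term $\rho(\mathrm{P}_{I_o\setminus\hat I}\theta)$ requires a lower-bound-type argument: on $I_o\setminus\hat I$ the coordinates of $\theta$ that the oracle kept but the estimator dropped must have been small, because otherwise dropping them would have increased the criterion $C(\cdot)$ — this is where I would revisit \eqref{def_hat_I} with $I' = \hat I \cup I_o$ to extract that $\rho(\mathrm{P}_{I_o\setminus\hat I}\theta) \lesssim \kappa(p(\hat I\cup I_o) - p(\hat I)) + \text{noise} \lesssim p(\hat I) + \text{noise}$. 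Assembling these, $\rho(\theta-\hat\theta) \le M_2(t)\, p(\hat I)$ with $M_2(t)$ affine in $t$, on an event of probability $\ge 1 - H_2 n^{-m_2}$.

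The main obstacle I anticipate is the uniform control of the data-dependent noise term $\rho(\mathrm{P}_{\hat I}\xi)$ and, relatedly, the bookkeeping needed to show $p(\hat I)$ is comparable to $p(I_o)$ (both upper and effectively lower bounded up to constants on the good event) — this is what makes $p(\hat I)$ a legitimate proxy for the oracle radius in the confidence ball. The union-bound step is clean in principle thanks to \eqref{rel_nu}, but one must track how large $\kappa$ needs to be: the subtraction trick $\rho(\mathrm{P}_I\xi) - \beta\kappa\, p(I)$ only yields a summable tail when $\beta\kappa$ exceeds a threshold determined by $M_\xi$ and $\alpha_\xi$, and the same $\kappa$ must simultaneously serve in the criterion \eqref{def_crit} and be "sufficiently large" for Theorem \ref{th1}; I would state the threshold explicitly as $\kappa \ge \kappa_0(M_\xi,\alpha_\xi,H_\xi,\tau)$. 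A secondary subtlety is that $M_2$ genuinely depends on $t$ (linearly), reflecting the deceptiveness phenomenon, so the coverage statement cannot be made uniform in $t$ — but this is expected and matches the EBR framework of \cite{Belitser:2017,Belitser&Nurushev:2020}.
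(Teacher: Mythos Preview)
Your size argument is essentially the paper's: both apply \eqref{def_hat_I} with $I'=I_o$, drop $-\rho(\mathrm{P}_{\hat I^c}\theta)$, and control the $\hat I$-noise via a union bound over $\mathcal I$ (the paper packages this as the event $E_3=\{\rho(\mathrm{P}_{\hat I}\xi)\le A_2 p(\hat I)\}$ already established in the proof of Theorem~\ref{th1}). No objection there.

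For coverage, your route differs from the paper's and has one soft spot. The paper does \emph{not} decompose $\rho(\theta-\hat\theta)$ directly; instead it (i) proves a high-probability \emph{lower} bound $p(\hat I)>\delta\,p(I_o)$, (ii) uses EBR to get $r(\theta)\le(t+\varkappa)p(I_o)$, and then (iii) simply invokes Theorem~\ref{th1}: on $\{p(\hat I)>\delta p(I_o)\}$ one has $M_2(t)\hat r\ge M_1 r(\theta)$ for $M_2(t)=M_1(t+\varkappa)/\delta$, and Theorem~\ref{th1} gives $\rho(\theta-\hat\theta)\le M_1 r(\theta)$. Your decomposition $\rho(\theta-\hat\theta)\le\rho(\mathrm{P}_{\hat I^c}\theta)+C_\tau\rho(\mathrm{P}_{\hat I}\xi)$ is fine, but the pieces $\rho(\mathrm{P}_{I_o^c}\theta)\le t\,p(I_o)$ and $\rho(\mathrm{P}_{I_o\setminus\hat I}\theta)\lesssim\kappa\,p(I_o)+\text{noise}$ both land you at multiples of $p(I_o)$, not $p(\hat I)$ --- so your route also hinges on the same lower bound $p(I_o)\lesssim p(\hat I)$, which you wave at as ``comparing criteria''.

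That lower bound is where the real work sits, and it needs \emph{two} ingredients, not one. Applying \eqref{def_hat_I} with $I'=\hat I\cup I_o$ (criterion minimization of $\hat I$) is half of it; the other half is the \emph{oracle} minimization \eqref{def_oracle}, namely $r(\theta,I_o)\le r(\theta,\hat I)$, which yields $\rho(\mathrm{P}_{\hat I^c}\theta)-\rho(\mathrm{P}_{I_o^c}\theta)\ge\varkappa(p(I_o)-p(\hat I))$. Combining the two on the event $\{p(\hat I)\le\delta p(I_o)\}\cap E_3$ forces $C_\tau\rho(\mathrm{P}_{I_o}\xi)\ge(\varkappa\tfrac{1-\delta}{1+\delta}-\kappa)p(I_o)$, which is a low-probability event by Condition~C1 \emph{provided $\varkappa$ is taken large relative to $\kappa$} (the paper needs $\varkappa>3(C_\tau M_\xi+C_\tau+\kappa)$). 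You never invoke \eqref{def_oracle} and you attribute the largeness requirement only to $\kappa$; but without the oracle inequality and without $\varkappa\gg\kappa$, the comparison $C(\hat I)\le C(\hat I\cup I_o)$ alone cannot prevent $\hat I$ from being much smaller than $I_o$. Once you fill this in, either route (yours or the paper's cleaner reduction to Theorem~\ref{th1}) closes.
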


So far, all results are formulated in terms of the oracle. 
For estimation and the construction of a confidence set, this local approach delivers 
the most general results, as the convergence rate of the proposed estimator is directly 
linked with the oracle sparsity rather than the true structure, which may not be sparse 
(but very close to a sparse structure).
However, if the parameter has a true sparse structure, the method will attain the 
quality pertinent to that true sparsity as well.  

To illustrate this, consider any $\theta\in\mathbb{R}^n$. The true structure $I_0(\theta)$ 
and the oracle structure $I_o(\theta)$  in general do not coincide, but they are related by 
\begin{align} 
\label{oracle vs true}
r(\theta)=r(\theta,I_o)\le r(\theta,I_0)
=p(I_0)=|I_0| \log^{1/2}({en}/{|I_0|}).
\end{align}
The first two relations hold by the definition of the oracle and the third because 
$\mathrm{P}_{I_0}\theta=\theta$ for the true structure. 
Since $r(\theta)=\rho(\theta-\mathrm{P}_{I_o} \theta)+p(I_o)$, we get 
in particular that $p(I_o) \le p(I_0)$ and hence $|I_o|\le |I_0|$.
By \eqref{oracle vs true}, 
\[
\sup_{\theta\in\ell_0[s]} r(\theta) \le s\log^{1/2}({en}/{s}).
\] 
Besides, all the constants in Theorems \ref{th1} and \ref{th2} are uniform over 
$\theta\in\ell_0[s]$ and $\mathbb{P}_\theta\in \mathcal{P}_\alpha$. 
The next result follows from these facts and Theorems~\ref{th1} and \ref{th2}.

\begin{corollary}
\label{cor1}
Under the conditions of Theorems \ref{th1} and \ref{th2}, with the same choice of the constants, 
\begin{align*}
\sup_{\theta\in\ell_0[s]} \sup_{\mathbb{P}_\theta\in \mathcal{P}_\alpha} 
\mathbb{P}_{\theta}\big(\rho(\theta-\hat{\theta})
\ge M_1s\log^{1/2}({en}/{s})\big)  
&\le H_1 n^{-m_1}, \\
\sup_{\theta\in\ell_0[s]}\sup_{\mathbb{P}_\theta\in \mathcal{P}_\alpha}
\mathbb{P}_\theta\big(\hat{r} \ge M_3 s\log^{1/2}({en}/{s})\big) 
&\le H_3 n^{-m_3},
\end{align*}
and 
\eqref{coverage credible} holds. 
\end{corollary}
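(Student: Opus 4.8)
The plan is to read off Corollary~\ref{cor1} from Theorems~\ref{th1} and \ref{th2} with essentially no extra work: one inserts into those two statements the explicit bound on the oracle quantile rate over the sparsity scale $\ell_0[s]$, and then uses that the constants appearing in Theorems~\ref{th1} and \ref{th2} are uniform over $\theta$ and over $\mathbb{P}_\theta\in\mathcal{P}_\alpha$.

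The only deterministic ingredient is the inequality $\sup_{\theta\in\ell_0[s]} r(\theta)\le s\log^{1/2}(en/s)$, which has already been obtained above: for $\theta\in\ell_0[s]$ the true structure satisfies $|I_0|\le s$ and $\mathrm{P}_{I_0}\theta=\theta$, so by the definition \eqref{def_oracle} of the oracle, $r(\theta)=r_\varkappa(\theta,I_o)\le r_\varkappa(\theta,I_0)=p(I_0)$; and since $p(I)^2=|I|\lambda(|I|)$ is a product of two nonnegative nondecreasing functions of $|I|\in(0,n]$, it follows that $p(I_0)\le s\log^{1/2}(en/s)$, see \eqref{oracle vs true}. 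Consequently, for every $\theta\in\ell_0[s]$ one has $M_1 r(\theta)\le M_1 s\log^{1/2}(en/s)$ and $M_3 r(\theta)\le M_3 s\log^{1/2}(en/s)$, which yields the event inclusions
\[
\big\{\rho(\theta-\hat\theta)\ge M_1 s\log^{1/2}(en/s)\big\}\subseteq\big\{\rho(\theta-\hat\theta)\ge M_1 r(\theta)\big\},
\]
and, analogously, $\{\hat r\ge M_3 s\log^{1/2}(en/s)\}\subseteq\{\hat r\ge M_3 r(\theta)\}$.

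It then remains to apply the two theorems. By Theorem~\ref{th1}, $\mathbb{P}_\theta(\rho(\theta-\hat\theta)\ge M_1 r(\theta))\le H_1 n^{-m_1}$ for every $\theta\in\mathbb{R}^n$ and every $\mathbb{P}_\theta\in\mathcal{P}_\alpha$, with $M_1,H_1,m_1$ depending only on $M_\xi,H_\xi,\alpha_\xi,\kappa,\varkappa,\tau$; combining this with the first event inclusion and taking the supremum over $\theta\in\ell_0[s]$ and over $\mathbb{P}_\theta\in\mathcal{P}_\alpha$ gives the first display of the corollary. Applying the size relation \eqref{size credible} of Theorem~\ref{th2} together with the second event inclusion produces the second display in exactly the same way, and the coverage relation \eqref{coverage credible} is precisely the first conclusion of Theorem~\ref{th2}, which already holds uniformly over $\theta\in\Theta_{\rm eb}(t)$ and over $\mathbb{P}_\theta\in\mathcal{P}_\alpha$. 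There is no genuine obstacle here, since the corollary merely repackages the two theorems; the only points meriting a word are the uniformity of the constants --- built into the hypotheses of Theorems~\ref{th1} and \ref{th2} and recorded at the start of Section~\ref{section_main} --- and the elementary monotonicity of $u\mapsto u\log^{1/2}(en/u)$ on $(0,n]$ used to pass from $|I_0|\le s$ to $p(I_0)\le s\log^{1/2}(en/s)$.
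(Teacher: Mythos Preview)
Your proposal is correct and follows essentially the same route as the paper: the paper derives the corollary directly from Theorems~\ref{th1} and \ref{th2} by invoking the bound $\sup_{\theta\in\ell_0[s]} r(\theta)\le s\log^{1/2}(en/s)$ from \eqref{oracle vs true} together with the uniformity of the constants, and you do exactly this, just spelling out the event inclusions and the monotonicity of $u\mapsto p(u)$ a bit more explicitly.
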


We claim that the obtained convergence rate $s\log^{1/2}({en}/{s})$ in terms of quintile loss 
function is optimal over the class $\ell_0[s]$ in the minimax sense. 
More precisely, the results of \cite{Johnstone&Silverman:2004} 
(in \cite{Johnstone&Silverman:2004}, relations (17) and (18) with $p=0$ and $q=1$)
and \eqref{prop_rho_abs} imply that for the normal model 
$\mathbb{P}_\theta=\bigotimes_{i\in [n]}\mathrm{N}(\theta_i,1)$, there exist absolute $C_1$ such that 
\[
\inf_{\tilde{\theta}} \sup_{\theta\in\ell_0[s]} 
\mathbb{E}_\theta|\theta-\tilde{\theta}| \ge  C_1s\log^{1/2}({en}/{s}).  
\]
This lower bound is not quite what we need to match with our upper bound because it
is formulated in terms of expectation and the $\ell_1$-loss. 
However, it is not difficult to establish the probability version of the lower bound: there exist 
absolute $C_1,C_2>0$ such that
 \[
 \inf_{\tilde{\theta}} \sup_{\theta\in\ell_0[s]} 
\mathbb{P}_\theta(|\theta-\tilde{\theta}| \ge  C_1s\log^{1/2}({en}/{s}) ) \ge C_2.
\]
The relation \eqref{prop_rho_abs} connects the $\ell_1$-loss with the quantile loss, so that 
the above relation  implies that 
\[
\inf_{\tilde{\theta}} \sup_{\theta\in\ell_0[s]} 
\mathbb{P}_\theta(\rho(\theta-\tilde{\theta}) \ge  C_1c_\tau ^{-1}s\log^{1/2}({en}/{s}) ) \ge C_2.
\]
In this form, the lower bound matches our upper bound given by Corollary \ref{cor1}, basically 
showing that our procedure attains the optimal rate $s\log^{1/2}({en}/{s})$ 
for the sparsity class  $\ell_0[s]$. This also means that the size of the constructed confidence ball 
$B(\hat{\theta}, M_2(t) \hat{r})$ is also optimal over the sparsity class $\ell_0[s]$ 
in the minimax sense. However, the unavoidable price for the optimality in the size relation 
is that the coverage relation holds uniformly over $\Theta_{\rm eb}(t)$, not over $\ell_0[s]$. 

\begin{remark}
Interestingly, although there are
some `deceptive' $\theta$ in $\ell_0[s]$ that are not covered by $\Theta_{\rm eb}(t)$, 
there are also some $\theta$'s in $\Theta_{\rm eb}(t)$ which do not belong 
to the sparsity class $\ell_0[s]$, but for which the coverage relation holds.
\end{remark}

\begin{remark}
We derived the adaptive (sparsity $s$ is also unknown) minimax results over the traditional 
sparsity scale $\{\ell_0[s], s \in [n]\}$ as consequence of our local oracle results.  
The scope of our local result is even broader, the minimax results can be derived 
over any scale of classes $\{\Theta_s, s \in \mathcal{S}\}$ with the corresponding minimax rates 
$R(\Theta_s)$ as longs as, for some $c>0$,
\[
\sup_{\theta\in\Theta_s} r(\theta) \le c R(\Theta_s), \quad s \in \mathcal{S}.
\] 
\end{remark}
Indeed, if the above relation is fulfilled, we immediately obtain all the claims of Corollary \ref{cor1} 
with $\Theta_s$ instead of $\ell_0[s]$, as consequences of Theorem \ref{th1} and \ref{th2}. 
For example, it is possible to derive the minimax results also for 
the scale of the $\ell_s$-balls: $\ell_s[\eta] = \{\theta\in\mathbb{R}^n: \tfrac{1}{n} \sum_{i=1}^n 
|\theta_i|^s \le \eta^s\}$, $s\in[0,2]$, with $\eta=\eta_n\to 0$ as $n\to \infty$.

\section{Proofs}
\label{section_proofs}
In this section we present the proofs of the theorems. 

\begin{proof}[Proof of Theorem \ref{th1}]
First we introduce some notation. Define the events 
\[
E_1=\{\rho(\theta-\hat{\theta}) \ge M_1 r(\theta)\}, \quad 
E_2=\{r(\theta,\hat{I})\le A_1 r(\theta)\}, \quad 
E_3=\{\rho(\mathrm{P}_{\hat{I}}\xi) \le A_2 p(\hat{I})\}, 
\]
where the constants $M_1, A_1,A_2>0$ are to be chosen later.
We evaluate the probability of interest $\mathbb{P}_\theta(E_1)
=\mathbb{P}_\theta(\rho(\theta-\hat{\theta}) \ge M_1 r(\theta))$ as follows:
\begin{align}
\mathbb{P}_\theta(E_1) &=
\mathbb{P}_\theta(E_1 \cap E_2\cap E_3)
+\mathbb{P}_\theta(E_1 \cap E_3^c) 
+\mathbb{P}_\theta(E_1\cap E_2^c \cap E_3) \notag \\
&= T_1+T_2+T_3.
\label{T1+T2+T3}
\end{align}
We bound these three probabilities separately.

First we bound $T_1$. Using the properties of the quantile loss function $\rho$ from Remark \ref{rem2}, 
we derive that  the event $E_1\cap E_2\cap E_3$ implies that 
\begin{align*}
M_1 r(\theta) &\leq \rho(\theta-\hat{\theta})=\rho(\theta-P_{\hat{I}} X) \\
&\le \rho(\theta-P_{\hat{I}} \theta) + \rho(-P_{\hat{I}} \xi)  
\le \rho(\theta-P_{\hat{I}} \theta)  + A_2 C_\tau  p(\hat{I}) \\
&\le  \max\{1,\tfrac{A_2C_{\tau}}{\varkappa}\} r( \theta, \hat{I}) 
\le \max\{1,\tfrac{A_2C_{\tau}}{\varkappa}\} A_1 r(\theta).
\end{align*}
Hence, if $M_1> A_1 \max\{1,\tfrac{A_2C_{\tau}}{\varkappa}\}$, then 
\begin{align}
\label{rel_E1}
T_1=\mathbb{P}_\theta(E_1 \cap E_2) 
\le\mathbb{P}_\theta\big(M_1r(\theta) \le \max\{1,\tfrac{A_2C_{\tau}}{\varkappa}\} A_1 r(\theta)\big) = 0.
\end{align}

To bound $T_2$, write 
\begin{align*}
T_2& =\mathbb{P}_\theta(E_1 \cap E_3^c)
\le \mathbb{P}_\theta(E_3^c)  
\le 
\sum \nolimits_{I \in \mathcal{I}}\mathbb{P}\big(\rho(\mathrm{P}_{{I}} \xi)> A_2 p(I) \big).
\end{align*}
If $A_2 > M_\xi$, using the last relation and Condition C1 \eqref{C1}, we obtain that 
\begin{align*}
\mathbb{P}\big(\rho(\mathrm{P}_{{I}} \xi)> A_2 p(I) \big) & =
\mathbb{P}\big(\rho(\mathrm{P}_{{I}} \xi)>M_\xi p({I}) + 
|I|^{1/2} (A_2 - M_\xi) \big(\lambda(|I|)\big)^{1/2} \big) \\
&\le H_{\xi} e^{-A_3\lambda(|I|)}, \qquad  I\in\mathcal{I},
\end{align*}
where $A_3=\alpha_\xi (A_2 - M_\xi)^2$.
Recall that $\lambda(s)$ is increasing for $s\in(0,n]$. This, \eqref{rel_nu} and the two 
previous displays entail that, for $A_3>1$ (so that $A_4= \tfrac{(A_3-1)}{2} >0$) 
\begin{align}
T_2 &\le
\mathbb{P}_\theta(E_3^c) 
\le H_{\xi}\sum_{I \in \mathcal{I}} e^{-A_3 \lambda(|I|)} \notag\\& 
\le H_{\xi}  e^{-A_4\lambda(1)}
\sum_{I \in \mathcal{I}} e^{-(1+A_4)\lambda(|I|)} \le H'_1 e^{-A_4\lambda(1)}.
\label{rel_E2}
\end{align}

Finally we bound $T_3=\mathbb{P}_\theta(E_1\cap E_2^c \cap E_3)$.   
Using \eqref{def_hat_I} with $I'=I_o$, we obtain that under $E_2^c \cap E_3$,
\begin{align*} 
C_{\tau} \rho(P_{I_0} \xi) &\ge
\rho(\mathrm{P}_{\hat{I}^c}\theta) + \kappa p(\hat{I})- \rho(\mathrm{P}_{I_o^c}\theta) 
- \kappa p(I_o) - \rho(P_{\hat{I}} \xi) \\ 
&\ge
\rho(\mathrm{P}_{\hat{I}^c}\theta)+\kappa p(\hat{I})- \rho(\mathrm{P}_{I_o^c}\theta) 
- \kappa p(I_o)-A_2 p(\hat{I}) \\ 
& \ge
\min \{\tfrac{\kappa- A_2}{\varkappa} , 1\}  r(\theta, \hat{I})   - \max \{ \tfrac{\kappa}{\varkappa} ,1 \} r(\theta)\\  
&\ge
\min \{\tfrac{\kappa- A_2}{\varkappa} , 1\}  A_1 r(\theta) - \max \{ \tfrac{\kappa}{\varkappa} ,1 \} r(\theta) \\
&=A_5 r(\theta) \ge A_5 \varkappa p(I_o),
\end{align*}
as long as $A_5=\min\{\tfrac{\kappa- A_2}{\varkappa} , 1\} A_1-\max\{\tfrac{\kappa}{\varkappa},1\}>0$. 
We conclude that the event $E_2^c \cap E_3$ implies the event $\{\rho(\mathrm{P}_{I_o}\xi) \ge 
\frac{A_5 \varkappa}{C_{\tau}} p(I_o)\}$, so that, by Condition C1 \eqref{C1}, 
\begin{align}
T_3 &\le \mathbb{P}_\theta(E_2^c \cap E_3) 
\le\mathbb{P}_\theta\big(\rho(\mathrm{P}_{I_o}\xi) \ge \tfrac{A_5\varkappa}{C_{\tau}}\, p(I_o) \big) \notag \\ 
&=\mathbb{P}_\theta\big(\rho(\mathrm{P}_{I_o}\xi) \ge M_\xi p(I_o) + 
|I_o|^{1/2}(\tfrac{A_5\varkappa}{C_{\tau}} -M_\xi) \lambda(|I_o|)^{1/2} \big) \notag \\ 
& \le H_{\xi} e^{-A_6\lambda(|I_o|)}\le  H_{\xi} e^{-A_6\lambda(1)},
\label{rel_E3}
\end{align}
where $A_6= \alpha_\xi (\frac{A_5\varkappa}{C_{\tau}}-M_\xi)^2>0$ if $A_5>C_\tau M_\xi\varkappa^{-1}$.

To summarize the choices of the constants, we need to take such constants $\kappa, M_1$ 
(in the claim of the theorem) and such constants $A_1, A_2$ (in the proof of the theorem) 
that $M_1>A_1\max\{1,\tfrac{A_2C_{\tau}}{\varkappa}\}$, 
$A_3=\alpha_\xi (A_2 - M_\xi)^2>1$, 
$\min\{\tfrac{\kappa- A_2}{\varkappa},1\} A_1-\max\{\tfrac{\kappa}{\varkappa},1\}>C_\tau M_\xi$. 
We take, for example, $A_2=M_\xi+(2/\alpha_\xi)^{1/2}$ (so that 
$A_3=\alpha_\xi (A_2-M_\xi)^2=2$, $A_4=1/2$), $\kappa=A_2+1$, $A_1=
(\tfrac{C_\tau}{\varkappa}(M_\xi+1)+\max\{\tfrac{\kappa}{\varkappa},1\})/\min(\varkappa^{-1},1)$ 
(so that $A_5=\tfrac{C_\tau}{\varkappa}(M_\xi +1)$, $A_6=\alpha_\xi$), and 
$M_1=A_1\max\{1,\tfrac{A_2C_{\tau}}{\varkappa}\}+1$. Combining \eqref{T1+T2+T3}, 
\eqref{rel_E1}, \eqref{rel_E2} and \eqref{rel_E3}, we obtain the claim of the theorem 
with the chosen $M_1$, $H_1=H'_1+H_\xi$ and $m_1=\min\{A_4,A_6\}$. 
\end{proof}

\begin{proof}[Proof of Theorem \ref{th2}]
For some fixed $\delta\in(0,1)$ (for example, take $\delta=1/2$), introduce 
the event $E_4=\{p(\hat{I})\le \delta p(I_o)\}$, where $I_o=I_o(\theta)$ is defined 
by \eqref{def_oracle}. 

First we evaluate $\mathbb{P}_\theta(p(\hat{I})\le \delta p(I_o))= \mathbb{P}_\theta(E_4)$. We have 
\begin{align*}
p(\hat{I} \cup I_o)&=|\hat{I} \cup I_o| \log^{1/2}\big(\tfrac{en}{|\hat{I} \cup I_o|}\big)\\
&\le |\hat{I}| \log^{1/2}\big(\tfrac{en}{|\hat{I} \cup I_o|}\big)+ 
|I_o|\log^{1/2}\big(\tfrac{en}{|\hat{I} \cup I_o|}\big)\\
&\le |\hat{I}| \log^{1/2}\big(\tfrac{en}{|\hat{I}|}\big)+ 
|I_o|\log^{1/2}\big(\tfrac{en}{|I_o|}\big)\\
&=p(\hat{I})+p(I_o).
\end{align*}
By using the above relation, we obtain that, under the event $E_4$, 
$p(\hat{I} \cup I_o)\le p(\hat{I})+p(I_o)\le (1+\delta)p(I_o)$. 
Hence, we have that, under $E_4$, 
\[
\tfrac{1}{1+\delta} p(\hat{I}\cup I_o) \le p(I_o) \le  p(\hat{I}\cup I_o).
\]
The last relation, \eqref{prop_monoton} with $I_1=(\hat{I} \cup I_o)^c\subseteq I_o^c = I_2$
and the definition \eqref{def_oracle} of the oracle $I_o$ imply that, under $E_4$,
\begin{align*}
 \rho(\mathrm{P}_{\hat{I}^c} \theta) - \rho(\mathrm{P}_{(\hat{I}\cup I_o)^c} \theta)
&\ge \rho(\mathrm{P}_{\hat{I}^c} \theta) - \rho(\mathrm{P}_{I_o^c} \theta)
\ge \varkappa(p(I_o)-p(\hat{I}))\notag\\
&\ge \varkappa (1-\delta)p(I_o)\ge \varkappa\tfrac{1-\delta}{1+\delta}
p(\hat{I} \cup I_o).
\end{align*}
 
Recall the event $E_3=\{\rho(\mathrm{P}_{\hat{I}}\xi) \le A_2 p(\hat{I})\}$ from 
the proof of the previous theorem and let $\kappa$ be sufficiently large to satisfy 
$\kappa>A_2$. Then, under $E_3 \cap E_4$, from \eqref{def_hat_I} with $I'=\hat{I} \cup I_o$ 
and $\kappa>A_2$, it follows that, if $\varkappa\tfrac{1-\delta}{1+\delta} -\kappa > C_\tau (M_\xi +1)$, then 
\begin{align}
C_\tau \rho(\mathrm{P}_{I_o} \xi) &\ge 
\kappa\big(p(\hat{I})-p(I')\big)- A_2 p(\hat{I}) 
+ \rho(\mathrm{P}_{\hat{I}^c} \theta) - \rho(\mathrm{P}_{I'^c} \theta) 
\label{prop_BG}\\
&\ge  \big(\varkappa\tfrac{1-\delta}{1+\delta} -\kappa\big)p(\hat{I} \cup I_o) \notag\\
&\ge  \big(\varkappa\tfrac{1-\delta}{1+\delta} -\kappa\big)p(I_o)> C_\tau(M_\xi +1) p(I_o). \notag
\end{align}
Notice that we need to assume $\varkappa$ to be sufficiently large to satisfy 
$\varkappa> 3(C_\tau M_\xi +C_\tau+\kappa)$. Using \eqref{rel_E2}, the last display 
and Condition C1 \eqref{C1}, we bound $\mathbb{P}_\theta(p(\hat{I})\le \delta p(I_o))$:  
\begin{align}
\mathbb{P}_\theta(p(\hat{I})\le \delta p(I_o)) &=
\mathbb{P}_\theta(E_4) \le \mathbb{P}_\theta(E_3^c)+
\mathbb{P}_\theta(E_3\cap E_4) \notag\\
&\le H'_1 e^{-A_4\lambda(1)} + \mathbb{P}_\theta\big(\rho(\mathrm{P}_{I_o} \xi)> (M_\xi +1) p(I_o)\big)\notag\\
&\le H'_1 e^{-A_4\lambda(1)}  + H_\xi e^{-\alpha_\xi \lambda(1)}.
\label{ineq_hatG}
\end{align}

Now we establish the coverage property. The constants $M_1$, $H_1$ and $m_1$ 
are defined in Theorem \ref{th1}. Take $M_2=\tfrac{M_1(t+\varkappa)}{\delta}$, where 
fixed $\delta\in(0,1)$ is from the definition of the event $\hat{\mathcal{G}}$. 
If $\theta\in\Theta_{\rm eb}(t)$, then, in view of \eqref{def_oracle},  
$r(\theta) = \rho(\mathrm{P}_{I_o^c}\theta) + \varkappa p(I_o) \le (t+\varkappa) p(I_o)$.
So, $p(I_o) \ge (t+\varkappa)^{-1} r(\theta)$ for all $\theta\in\Theta_{\rm eb}(t)$.
Combining this with Theorem \ref{th1} and \eqref{ineq_hatG} yields that, 
uniformly in $\theta\in\Theta_{\rm eb}(t)$,
\begin{align*}
\mathbb{P}_\theta \big(\theta\notin &B(\hat{\theta},M_2\hat{r})\big) \\
&\le \mathbb{P}_\theta\big(\rho(\theta-\hat{\theta})> M_2\hat{r},\hat{r}\ge \delta p(I_o)\big)
 +\mathbb{P}_\theta\big(\hat{r}<\delta p(I_o) \big)\notag\\
&\le \mathbb{P}_\theta\big(\rho(\theta-\hat{\theta})> M_1(t+\varkappa) p(I_o) \big)
+ \mathbb{P}_\theta\big(p(\hat{I})< \delta p(I_o)\big) \\
&\le  \mathbb{P}_\theta\big(\rho(\theta-\hat{\theta})> M_1 r(\theta)\big)
+ \mathbb{P}_\theta\big(p(\hat{I})< \delta p(I_o)\big) \\
&\le H_1 e^{-m_1 \lambda(1)} + H'_1 e^{-A_4\lambda(1)}  + H_\xi e^{-\alpha_\xi \lambda(1)}.
\end{align*}
The coverage relation follows.

Let us show the size property. 
Recall again the event $E_3=\{\rho(\mathrm{P}_{\hat{I}}\xi) \le A_2 p(\hat{I})\}$ from 
the proof of the previous theorem. By using \eqref{prop_BG} with $I'=I_o$, we derive that 
the event $\{p(\hat{I}) \ge M_3 r(\theta)\} \cap E_3$ implies the event 
$\{C_\tau \rho(\mathrm{P}_{I_o} \xi) \ge A_7p(\hat{I})\ge A_7 M_3 r(\theta)\ge 
A_7 M_3\varkappa p(I_o)\}$ with $A_7 =\kappa - A_2-M_3^{-1}>0$, where $A_2$ is defined 
in the proof of Theorem \ref{th1}. We thus have that, for any $\theta\in\mathbb{R}^n$,
\begin{align*}
\mathbb{P}_\theta(\hat{r} \ge M_3 r(\theta))
&\le
\mathbb{P}_\theta(E_3^c)+
\mathbb{P}_\theta(\hat{r} \ge M_3 r(\theta),E_3) \\
&=
\mathbb{P}_\theta(E_3^c)+\mathbb{P}_\theta(p(\hat{I}) \ge M_3 r(\theta),E_3)\\ 
&\le \mathbb{P}_\theta(E_3^c)+\mathbb{P}_\theta\big(p(\hat{I}) \ge M_3 r(\theta),
C_\tau \rho(\mathrm{P}_{I_o} \xi) \ge A_7 p(\hat{I})\big) \\
&\le \mathbb{P}_\theta(E_3^c)+\mathbb{P}_\theta\big(C_\tau \rho(\mathrm{P}_{I_o} \xi) \ge A_7p(\hat{I})
\ge A_7 M_3\varkappa p(I_o) \big) \\
& \le H'_1 e^{-A_4\lambda(1)}  + H_\xi e^{-\alpha_\xi \lambda(1)}, 
\end{align*}
where the last inequality in the above display is obtained by \eqref{rel_E2} and Condition C1 \eqref{C1}, and 
$M_3$ is chosen to be so large that $A_7 M_3\varkappa > C_\tau(M_\xi +1)$.
The size  relation follows.
\end{proof}

\newpage




\begin{thebibliography}{9}

\bibitem{Abramovich&etal:2006}
Abramovich, F., Benjamini, Y., Donoho, D. L. and Johnstone, I. M. (2006). Adapting to unknown 
sparsity by controlling the false discovery rate. {\it Ann.\ Statist.} 34, 584--653.

\bibitem{Abrevaya:2002}
Abrevaya, J. (2002). The effects of demographics and maternal behavior on the distribution 
of birth outcomes. In {\it Economic applications of quantile regression}, Springer, 247--257. 

\bibitem{Babenko&Belitser:2010}
Babenko, A. and Belitser, E. (2010).
Oracle projection convergence rate of posterior. \textit{Math.\ Meth.\ Stat.} 19, 219--245.

\bibitem{Baraud:2004} 
Baraud, Y. (2004). Confidence balls in Gaussian regression. {\it Ann.\ Statist.} 32, 528--551.

\bibitem{Belitser&Nurushev:2019}
Belitser, E. and Nurushev, N. (2019). General framework for projection structures. ArXiv: 1904.01003.
 
\bibitem{Belitser:2017} 
Belitser, E. (2017). On coverage and local radial rates of credible sets. 
\textit{Ann.\ Statist.} 45, 1124--1151.

\bibitem{Belitser&Ghosal:2020} 
Belitser, E. and Ghosal, S. (2020). Empirical Bayes oracle uncertainty quantification for regression. 
{\it Ann. Statist.} 31, 536--559. 

\bibitem{Belitser&Nurushev:2020} 
Belitser, E. and Nurushev, N. (2020). 
Needles and straw in a haystack: robust empirical Bayes confidence for possibly sparse sequences.
{\it Bernoulli} 26, 191--225. 

\bibitem{Bellec:2018}
Bellec, P.C. (2018). Sharp oracle inequalities for least squares estimators in shape restricted regression.
\textit{Ann.\ Stat.} 46, 745--780.

\bibitem{Benjamini&Hochberg:1995}
Benjamini, Y. and Hochberg, Y. (1995). 
Controlling the false discovery rate: a practical and powerful approach to multiple testing. 
{\it J.\ Roy.\ Statist.\ Soc.\ Ser.\ B.} 57, 289--300. 

\bibitem{Belloni&Chernozhukov:2011}
Belloni, A. and Chernozhukov, V. (2011).  L1-penalized quantile regression in high-dimensional 
sparse models. \textit{Ann.\ Statist.}  39, 82--130.

\bibitem{Belloni&etal:2019}
Belloni, A.,  Chernozhukov, V.  and Kato, K. (2019). Valid post-selection inference in high-dimensional approximately sparse quantile regression models. {\it J. Amer.\ Stat.\ Association} 114, 749--758.

\bibitem{Birge&Massart:2001}
Birg{\'e}, L. and Massart, P. (2001). Gaussian model selection. \textit{J.\ Eur.\ Math.\ Soc.} 3, 203--268.

\bibitem{Butucea&etal:2018} 
Butucea, C., Ndaoud, M., Stepanova, N. and Tsybakov, A. (2018). Variable selection with Hamming loss.
\textit{Ann.\ Statist.} 46, 1837--1875.

\bibitem{Castillo&vanderVaart:2012} 
Castillo, I. and van der Vaart, A. (2012). Needles and straw in a haystack: posterior concentration 
for possibly sparse sequences. \textit{Ann.\ Stat.} 40, 2069--2101.

\bibitem{Donoho&etal:1992} 
Donoho, D.L., Johnstone, I.M., Hoch, J.C. and Stern, A.S. (1992).
Maximum entropy and the nearly black object (with Discussion).
\textit{J.\ Roy.\ Statist.\ Soc.\ Ser.\ B} 54, 41--81. 
	
\bibitem{Donoho&Johnstone:1994} 
Donoho, D.L. and Johnstone, I.M. (1994). Minimax risk over $\ell_p$-balls for $\ell_q$-error.
\textit{Probab.\ Theory Rel.\ Fields.} 99, 277--303. 


\bibitem{Belloni&etal:2011}
Belloni, A.,  Chernozhukov, V.  and Wang, L. (2011). Square-root lasso: pivotal recovery of 
sparse signals via conic programming. {it Biometrika} 98, 791--806.


 

\bibitem{Ciuperca:2018}
Gabriela Ciuperca, G. (2018). Test by adaptive lasso quantile method for real-time detection 
of a change-point. {\it Metrika} 81, 689--720.
 
\bibitem{Efron:2008}
\textsc{Efron, B.} (2008). 
Microarrays, empirical Bayes and the two-groups model. {\it Statist. Sci.} 23, 1--22.

\bibitem{Johnstone:2017} 
Johnstone, I.M. (2017). Gaussian estimation: Sequence and wavelet models. Book draft.

\bibitem{Johnstone&Silverman:2004}
Johnstone, I.M. and Silverman, B.W. (2004). Needles and straw in haystacks: Empirical Bayes estimates of possibly sparse sequences. {\it Ann.\ Statist.} 32, 1594--1649. 



\bibitem{Hao&Naiman:2007}
Hao, L. and Naiman, D.Q. (2007). Quantile regression. Sage.
%
%
%
\bibitem{Huber:2011}
Huber, P.J. (2011). Robust statistics. Springer.
%
\bibitem{Hulman&etal:2015}
Hulm\'an, A., Daniel R Witte, D.R., Ker\'enyi, Z, Madar\'asz, E., T\'anczer, T., Bosny\'ak, Z, 
Szab\'o, E., Ferencz, V.,  P\'eterfalvi, A., Tab\'ak, A.G, et al. (2015). 
Heterogeneous effect of gestational weight gain on birth weight: quantile regression analysis from a population-based screening. {\it Annals of Epidemiology} 25, 133--137.
%
\bibitem{Jiang&etal:2014}
Jiang, L., Bondell, H.D. and Wang H.J. (2014). Interquantile shrinkage and variable selection in quantile regression. {\it Comp.\ Stat.\ \& Data Analysis} 69, 208--219.

\bibitem{Jiang&etal:2013}
Jiang, L., Wang H.J.  and Bondell, H.D. (2013). Interquantile shrinkage in regression models. 
{\it J.\ of Comp.\ and Graph.\ Stat.} 22, 970--986.

\bibitem{Koenker:2005}
Koenker, R. (2005). Quantile Regression. Cambridge: Cambridge University Press.

\bibitem{Koenker:2017}
Koenker, R. (2017). Quantile regression: 40 years on. {\it Annual review of
economics} 9, 155--176.

\bibitem{Koenker&Bassett:1978}
Koenker, R. and Bassett, G. (1978). Regression quantiles. {\it Econo-
metrica}, 33--50.
 
\bibitem{Li:1989} 
Li, K.-C. (1989). Honest confidence regions for nonparametric regression. \textit{Ann.\ Statist.} 17, 1001--1008.

%
%
 
\bibitem{Santosa&Symes:1986}
Santosa, F. and William W Symes, W.W. (1986). Linear inversion of band-limited reflection seismograms. 
{\it SIAM J. Scient.\ Statist. Computing} 7, 1307--1330.
%
%

\bibitem{Martin&Walker:2014}
Martin, R. and Walker, S.G.  (2014). Asymptotically minimax empirical Bayes estimation 
of a sparse normal mean vector. \textit{Electron.\ J.\ Stat.} 8, 2188--2206.

\bibitem{Tsybakov:2009}
Tsybakov, A.B. (2009). Introduction to nonparametric estimation. 
Springer Series ion Statistics, Springer.

%
%
 
%
\bibitem{Zou&Yuan:2008}
Zou, H. and Yuan, M. (2008). Composite quantile regression and the oracle model selection theory. \textit{Ann.\ Statist.} 36, 1108--1126.
 

\bibitem{Szabo&etal:2015}
Szab\'{o}, B. T., van der Vaart, A.W. and van Zanten, J.H.  (2015).
Frequentist coverage of adaptive nonparametric Bayesian credible sets. \textit{Ann.\ Statist.} 43, 1391--1428.

\bibitem{vanderPas&etal:2017}
van der Pas, S.L., Szab\'{o}, B.T. and van der Vaart, A.W. (2017).
Uncertainty quantification for the horseshoe (with discussion).
\textit{Bayesian Anal.} 12, 1221--1274.

\end{thebibliography}

\end{document}